\renewcommand{\div}{\textrm{div}}
\newcommand{\curl}{\textrm{curl}}
\newcommand{\R}{\mathbb{R}}
\newcommand{\Id}{\textrm{Id}}
\newcommand{\T}{\mathbb{T}}
\newcommand{\Z}{\mathbb{Z}}
\newcommand{\eps}{\varepsilon}
\newcommand{\calA}{\mathcal{A}}
\newcommand{\calT}{\mathcal{T}}
\newcommand{\calV}{\mathcal{V}}
\newtheorem{thm}{Theorem}
\title{Volume enclosed by a flux surface}
\author{R.S.MacKay}
\address{Mathematics Institute, University of Warwick, Coventry CV4 7AL, UK}
\email{R.S.MacKay@warwick.ac.uk}
\date{\today}                                           
\begin{document}
\begin{abstract}
The paper describes ways that the computation of the volume enclosed by an invariant torus (flux surface) for a magnetic field can be reduced from a 3D integral to a 2D integral.
\end{abstract}

\maketitle
\section{Introduction}

In generality, a {\em magnetic field} is a vector field $B$ on an orientable 3D manifold $M$ that preserves a volume-form $\Omega$ (for the standard Euclidean volume, one writes $\div\, B = 0$).  Defining the associated flux 2-form $\beta = i_B\Omega$, the volume-preservation condition can be written as $d\beta = 0$ ($\beta$ is closed).  For a tutorial on use of differential forms in plasma physics, see \cite{M}.

A stronger condition (for general $M$) is that $\beta$ be exact, i.e.~$\beta = d\alpha$ for some 1-form $\alpha$.  It is usual to write this in terms of the associated vector potential $A$, related by $\alpha = A^\flat$ (where for a Riemannian metric $g$, $A^\flat$ is defined by $A^\flat(\xi) = g(A,\xi)$ for all tangent vectors $\xi$).  Exactness is equivalent to assuming $\int_S \beta = 0$ for any closed surface $S$ (not just those that bound a volume).  Most discussions of magnetic fields assume this stronger condition and we shall do so.

An invariant torus for $B$ is called a {\em flux surface}.  If $B$ is smooth enough (e.g.~$C^3$ and the third derivative is H\"older continuous), then various simple conditions imply a set of positive volume of flux surfaces, for example, existence of a generic elliptic closed fieldline or existence of one flux surface with smooth enough conjugacy to a Diophantine rotation (KAM theory, for a semi-popular introduction, see  \cite{KAMref}).

In the design of magnetic confinement devices for plasma, it is often desired to quantify the volume $V$ enclosed by a flux surface $S$.  In particular one might want to know the volume enclosed by the outermost flux surface of given class contained within the vacuum vessel, but we pose the question more generally.  Once a flux surface has been calculated, this can be done by 3D integration, but the question arises whether there is a more efficient way and that uses the invariance of $S$ under $B$.

One simple improvement is to reduce the problem to a surface integral:~if $M$ is contractible then $\Omega$ can be written as $d\nu$ for a 2-form $\nu$ (in many ways, e.g.~for $\Omega = dx \wedge dy \wedge dz$ one can take $\nu = x\, dy \wedge dz$ or cyclic permutations) and then $\int_V \Omega =  \int_S \nu$, by Stokes' theorem.  But this still does not use the invariance under $B$.

One way to use invariance under $B$ is the formula
\begin{equation}
V = \int_D T \beta,
\label{eq:Tbeta}
\end{equation}
where $D$ is any disk transverse to $B$ whose boundary is on the flux surface $S$, and $T$ is the first-return time to $D$ along the fieldline flow $\dot{x}=B(x)$.  The formula can be obtained from the identity $\Omega = \frac{B^\flat}{|B|^2} \wedge \beta$ by decomposing the domain into the bundle of fieldline segments from $D$ to $D$.  The same can be done for the volume between two nested tori by replacing $D$ by any annulus transverse to $B$ whose boundary components are on the respective tori; this includes the limiting case where one of the tori is just an elliptic closed fieldline, called a magnetic axis.  Although (\ref{eq:Tbeta}) is a 2D integral, it involves computing the return time function, which effectively makes the integration 3D, so there is no real saving.  

Here we give various formulae for the volume enclosed by a flux surface that use the invariance under $B$ and promise to be more efficient.
We build up from the simplest situation to the general one.

Such an important topic has, of course, been addressed before, e.g.~\cite{H} in the context of magnetohydrostatic (MHS) fields.  We will mention connections as we go along.

\section{Quasisymmetric fields}

We start by addressing the special case of a {\em quasisymmetric} magnetic field $B$, i.e.~there is a vector field $u$ independent from $B$ almost everywhere (in the region of interest), with $L_u\beta=0$, $L_u\Omega=0$ and $L_uB^\flat=0$.  Under mild additional conditions \cite{BKM}, which we assume, every orbit of $u$ is closed and has the same period $\tau>0$.

The simplest examples are the axisymmetric fields, those for which $u = \partial_\phi$ in cylindrical coordinates (then $\tau = 2\pi$).  It is an open question whether there are any other exactly quasisymmetric fields (compare Kovalevskaya's tops \cite{K}), but fields with non-axisymmetric quasisymmetry to a high degree of accuracy can be made \cite{LP}.

The conditions $L_u\beta=0$, $L_u\Omega=0$ imply that $i_u\beta$ is a closed 1-form.  
Assuming there is no homological obstruction then it is exact, i.e.~there exists a function $\psi$ such that $i_u\beta = d\psi$.
It follows that $i_ud\psi$ and $i_Bd\psi=0$, so $u$ and $B$ are tangent to the level sets of $\psi$.  The bounded regular components of level sets of $\psi$ are tori, because they are orientable surfaces supporting a nowhere-zero vector field.

Because both $B$ and $u$ are volume-preserving, $L_u\beta=0$ is equivalent to the commutator $[u,B]$ being $0$.
Then $(u,B)$ generate an action $\phi$ of $\R^2$ on the flux surfaces; for $t=(t_u,t_B) \in \R^2$ and $x$ on a flux surface $S$, $\phi_t(x)$ is the point reached by flowing for time $t_u$ with $u$ and $t_B$ with $B$ (the order does not matter because the fields commute).
We already have that $\phi_{(\tau,0)}$ is the identity.  Choose a $u$-line $\gamma$ and a point $x \in \gamma$.  There is a first $T>0$ such that the flow of $B$ starting at $x$ returns to $\gamma$.  $T$ is independent of $x \in \gamma$ because if $y \in \gamma$ then $y=\phi_{(t_u,0)}(x)$ for some $t_u$, so $\phi_{(0,T)}(y) = \phi_{(t_u,T)}(x) = \phi_{(t_u,0)}(\phi_{(0,T)}(x))$ is in $\gamma$ and is not for smaller $T$.  Furthermore, $T$ does not depend on the choice of $u$-line $\gamma$ on $S$ (but it does in general depend on the flux surface $S$).

Define $V$ to be the volume enclosed by a flux surface.  It is locally a function of $\psi$.  

\begin{thm}
$dV = \tau T(\psi) d\psi.$
\end{thm}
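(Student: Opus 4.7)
The plan is to use the commuting pair $(u,B)$ to introduce adapted coordinates $(t_u,t_B,\psi)$ on a tubular neighbourhood of a flux surface, pull back $\Omega$ to a constant-coefficient top form in these coordinates, and then identify $\tau T(\psi)$ as the Euclidean area of a fundamental domain for the period lattice of the $\R^2$-action on each flux surface.

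First I would fix a smooth transversal $\psi \mapsto x_0(\psi)$ picking one point on each nearby flux surface, and define the local parametrisation $\Phi(t_u,t_B,\psi) = \phi_{(t_u,t_B)}(x_0(\psi))$. By construction $\Phi_\ast \partial_{t_u} = u$ and $\Phi_\ast \partial_{t_B} = B$, while $\Phi_\ast \partial_\psi$ is transverse to the flux surfaces and satisfies $d\psi(\Phi_\ast \partial_\psi) = 1$. Because $\Omega$ is a 3-form in three dimensions, its pullback must take the form $J\,dt_u \wedge dt_B \wedge d\psi$ with $J = \Omega(u,B,\Phi_\ast\partial_\psi)$. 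The defining relations $\beta = i_B\Omega$ and $i_u\beta = d\psi$ give $\Omega(B,u,\cdot) = d\psi$, so $|J|=1$; fixing orientations so that $V$ is increasing on the enclosed side of $\psi$, we obtain $\Phi^\ast\Omega = dt_u\wedge dt_B\wedge d\psi$.

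Next I would identify the period lattice $\Lambda(\psi) \subset \R^2$ of the $\R^2$-action on the flux surface $\{\psi = \mathrm{const}\}$. By hypothesis $(\tau,0) \in \Lambda$ and this vector is primitive since $\tau$ is the minimal $u$-period. Letting $(a,T')\in\Lambda$ be a vector whose second coordinate is positive and minimal, the pair $\{(\tau,0),(a,T')\}$ is a $\Z$-basis of $\Lambda$, so the fundamental domain has Euclidean area $\tau T'$. I must then argue $T' = T$: an element $(a,T')\in\Lambda$ corresponds precisely to $\phi_{(0,T')}(x_0) = \phi_{(-a,0)}(x_0) \in \gamma$, so minimality of $T'$ among positive second coordinates is exactly the first-return property defining $T$.

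Finally, Fubini gives
$$V(\psi_1) - V(\psi_0) = \int_{\psi_0}^{\psi_1}\!\!\int_{F(\psi)} dt_u\,dt_B\,d\psi = \int_{\psi_0}^{\psi_1} \tau\,T(\psi)\,d\psi,$$
where $F(\psi)$ denotes any fundamental domain of $\Lambda(\psi)$, and differentiating in $\psi_1$ yields the claim. I expect the most delicate step to be the lattice identification: one has to be sure that no element of $\Lambda$ has second coordinate strictly between $0$ and $T$, which amounts to reading the paper's definition of $T$ as the minimal $t_B>0$ for which the joint $\R^2$-orbit of $x_0$ meets the $u$-orbit $\gamma$ again. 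The orientation bookkeeping in step one is also easy to get wrong but only affects an overall sign.
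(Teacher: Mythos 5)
Your proof is correct, but it follows a genuinely different route from the one the paper gives for this theorem. The paper's own proof is a two-line flux-tube argument: the infinitesimal annulus swept by a $u$-line $\gamma$ moving transversally to the neighbouring surface carries flux $\tau\,d\psi$ (since $i_u\beta=d\psi$ and $\gamma$ has period $\tau$), and flowing that annulus along $B$ for the return time $T$ sweeps out the shell between the two surfaces, whose volume is the return time integrated against the flux (this rests on the identity $\Omega = \frac{B^\flat}{|B|^2}\wedge\beta$ already used for equation (1) in the introduction). What you have written instead is the Liouville--Arnol'd/period-lattice computation that the paper deploys only later, for the more general Theorem~\ref{thm:Deltadpsi}: there $dV=\Delta(\psi)\,d\psi$ with $\Delta=\det\calT$, and the quasisymmetric case is exactly the specialisation $\calT=\left[\begin{array}{cc}\tau & c\\ 0 & T\end{array}\right]$, $\Delta=\tau T$, which the paper points out at the end of section~\ref{sec:ffsymm}. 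So your argument is essentially a proof of the stronger statement, restricted to this case. What each approach buys: the paper's version is shorter and more physically transparent, but leaves the well-definedness of $T$ and the change of variables implicit; yours makes the Jacobian computation, the lattice structure, and the identification $T'=T$ explicit, and generalises without change to flux-form symmetries. One small sign point: from $\Omega(B,u,\cdot)=i_ui_B\Omega=d\psi$ you actually get $J=\Omega(u,B,\Phi_\ast\partial_\psi)=-1$, so the natural positively-oriented volume element is $dt_B\wedge dt_u\wedge d\psi$; your orientation caveat covers this, but it is worth stating the sign correctly.
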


\begin{proof}
The magnetic flux across an annulus with boundary components $\gamma$ and $\gamma'$ on two neighbouring flux surfaces is $\tau d\psi$.  The volume it sweeps out when following $B$ is the integral of the return time with respect to the flux. 
\end{proof}

So one can compute the volume enclosed by a flux surface by integrating $\frac{dV}{d\psi}$ from some reference $\psi_0$ where the volume is known (e.g.~magnetic axis where it is zero), if one computes the return time function $T(\psi)$.  The result is a 2D integration, which is a saving over the original 3D integration.

\section{Fields with weak quasisymmetry}

A magnetic field has {\em weak quasisymmetry} $u$ if $L_u\beta=0$, $L_u\Omega=0$ and $L_u|B|=0$; this is a translation to differential forms of the conditions defined in \cite{RB}.  The latter is equivalent to $i_BL_u B^\flat=0$, so weak quasisymmetry is a generalisation of quasisymmetry.  The status of weak quasisymmetry as the zeroth order conditions for a velocity-dependent Hamiltonian symmetry of guiding-centre motion is discussed in \cite{BKM2}.

The preceding arguments go through with the exception that the period $\tau$ of the $u$-lines is now in general a function of $\psi$, so it needs computing too. 

\begin{thm}
$dV = \tau(\psi) T(\psi) d\psi.$
\label{thm:tauTdpsi}
\end{thm}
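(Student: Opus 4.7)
The plan is to reuse the argument for Theorem 1 essentially verbatim, identifying which ingredients survive the weakening of the hypothesis and isolating the single point that requires modification. First I would check that the apparatus from the previous section still works under weak quasisymmetry. The conditions $L_u\beta=0$ and $L_u\Omega=0$ alone imply that $i_u\beta$ is closed, since $d(i_u\beta)=L_u\beta-i_u d\beta=0$, and by the assumption of no homological obstruction exact, so the flux function $\psi$ with $i_u\beta=d\psi$ still exists. These same two conditions imply $[u,B]=0$, as already noted, so $u$ and $B$ still generate a commuting $\R^2$ action on each flux surface. The dropped condition $L_uB^\flat=0$ plays no role in this step; the retained replacement $L_u|B|=0$ is what distinguishes weak quasisymmetry from a yet weaker notion, but is not needed for the volume formula itself.

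Next I would re-examine the constancy of the periods. Each flux surface is a 2-torus on which $u$ and $B$ are independent commuting flows, so the isotropy subgroup of the $\R^2$ action is a lattice that is the same at every point of the surface. Consequently the least $u$-period $\tau$ and the least $B$-return time $T$ to a fixed $u$-line depend only on the flux surface. In the previous section the additional ingredient $L_uB^\flat=0$ and the hypotheses of \cite{BKM} fixed $\tau$ globally; without them one only retains $\tau=\tau(\psi)$, and similarly $T=T(\psi)$.

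Finally I would redo the local flux/volume computation of the previous proof, keeping track of the $\psi$-dependence. Pick $u$-orbits $\gamma_\psi\subset S_\psi$ and $\gamma_{\psi+d\psi}\subset S_{\psi+d\psi}$ and join them by a narrow transverse annulus $A$. From $i_u\beta=d\psi$ the flux through $A$ is $\int_A\beta=\tau(\psi)\,d\psi$ to first order. Flowing $A$ along $B$ for one return time $T(\psi)$ sweeps out, to first order in $d\psi$, the entire toroidal shell between $S_\psi$ and $S_{\psi+d\psi}$, and the identity $\Omega=|B|^{-2}B^\flat\wedge\beta$ used in the introduction (equivalently, formula (\ref{eq:Tbeta}) applied to the thin annulus) gives that the swept volume equals $T(\psi)\int_A\beta=\tau(\psi)T(\psi)\,d\psi$, which is the claimed expression for $dV$.

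The only delicate point I anticipate is verifying that $u$ is non-vanishing and independent of $B$ on the particular flux surface under consideration, so that the $\R^2$ action is locally free and $A$ is genuinely transverse to $B$; this should follow from the ``independent almost everywhere'' hypothesis combined with the torus structure, and deserves at most a one-line remark rather than a real argument.
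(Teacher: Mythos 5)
Your proposal is correct and follows essentially the same route as the paper: the paper proves Theorem~\ref{thm:tauTdpsi} simply by remarking that the argument for Theorem~1 (flux $\tau\,d\psi$ across an annulus between $u$-lines on neighbouring surfaces, swept volume equal to the return time integrated against that flux) goes through verbatim, with the sole change that $\tau$ is now a function of $\psi$. Your additional checks --- that only $L_u\beta=0$ and $L_u\Omega=0$ are needed for the existence of $\psi$ and the commuting $\R^2$ action, and that $T$ and $\tau$ are constant on each surface via the lattice of the action --- are sound and merely make explicit what the paper leaves implicit.
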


Thus, computing $V$ is now a 1D integration of a pair of 1D integrations, still equivalent to 2D.

\section{Fields with flux-form symmetry}
\label{sec:ffsymm}

Next we generalise to magnetic fields $B$ with a volume-preserving field $u$ independent from $B$ almost everywhere (in the region of interest), such that $L_u \beta = 0$ (dropping the requirement of $i_BL_uB^\flat=0$).  We call this a {\em flux-form symmetry}.  Equivalently, $u$ is a volume-preserving field that commutes with $B$.  

This enlarges the previous context to include, for example, any non-degenerate magnetohydrostatic (MHS) field $B$:~simply take $u$ to be the current density $J = \curl\, B$.  The MHS property $J\times B = \nabla p$ for pressure field $p$ implies that $L_J\beta = 0$.  Non-degeneracy means $dp \ne 0$ almost everywhere, which thereby makes $J$ and $B$ independent almost everywhere (it should be noted, however, that no non-axisymmetric non-degenerate MHS fields are known).

As before, assuming there is no homological obstruction then $i_u\beta=d\psi$ for some function $\psi$ (in the MHS case with $u=J$ one can take $\psi = p$), and the bounded regular components of level sets of $\psi$ are tori invariant under both $B$ and $u$.


Again, $(u,B)$ generate an action $\phi$ of $\R^2$ on the flux surfaces.
This implies that for each flux surface there is a lattice $\Gamma$ in $\R^2$ (discrete subset closed under subtraction) such that $\phi_t =\Id$ iff $t \in \Gamma$.  It can no longer be assumed that there is a $\tau>0$ with $(\tau,0) \in \Gamma$.  Let $T_1, T_2 \in \R^2$ be generators for $\Gamma$.  They can be computed by integration of ODEs for $u$ and $B$ and root finding.  Readers may recognise this as a strategy to prove the Liouville-Arnol'd theorem (e.g.~\cite{LAref}), which gives a parametrisation of $S$ by $\R^2/\Z^2$ with respect to which both $u$ and $B$ are constant.  In the magnetic field context, it was rediscovered by Hamada \cite{Ha}.

Make a matrix $\calT$ with the vectors $T_1,T_2$ as columns and let $\Delta = \det \calT$, which is a function of $\psi$.  


\begin{thm}
$dV = \Delta(\psi) d\psi.$
\label{thm:Deltadpsi}
\end{thm}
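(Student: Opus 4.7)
The plan is to parametrise the thin shell bounded by neighbouring flux surfaces $S_\psi$ and $S_{\psi+d\psi}$ using the commuting $\R^2$-action generated by $u$ and $B$, augmented by a transverse direction, and to read off the volume directly from the Jacobian of the parametrisation. This is the natural generalisation of the slab argument behind Theorems~1 and~2, with the pair of periods $\tau,T$ replaced by the lattice generators $T_1, T_2$.

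First I would fix a base point $x_0 \in S_\psi$ and observe that the orbit map $\Phi : \R^2 \to S_\psi$, $\Phi(t_u,t_B) = \phi_{(t_u,t_B)}(x_0)$, descends to a diffeomorphism $\R^2/\Gamma \to S_\psi$. A fundamental parallelogram $P$ spanned by $T_1, T_2$ then has signed Euclidean area $\Delta = \det \calT$ and covers $S_\psi$ exactly once (up to a measure-zero boundary identification). Next I would choose a vector field $w$ in a neighbourhood of $S_\psi$, transverse to the flux surfaces and normalised by $d\psi(w) = 1$, so that flowing $w$ for time $s$ advances $\psi$ by~$s$.

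The combined map $F : P \times [0,d\psi] \to M$, $(t_u,t_B,s) \mapsto \phi^w_s(\phi_{(t_u,t_B)}(x_0))$, is a bijection onto the shell between $S_\psi$ and $S_{\psi+d\psi}$ (up to boundary identifications of measure zero), and pulling back the volume form gives $F^*\Omega = \Omega(u,B,w)\, dt_u \wedge dt_B \wedge ds$. Using $\beta = i_B\Omega$ and $i_u\beta = d\psi$, the Jacobian is computed in one line:
\[\Omega(u,B,w) = -\Omega(B,u,w) = -\beta(u,w) = -(i_u\beta)(w) = -d\psi(w) = -1.\]
Integrating over $P \times [0,d\psi]$ therefore produces a shell volume of magnitude $|\Delta|\, d\psi$, which is $dV = \Delta(\psi)\, d\psi$ once the generators are ordered so that $\Delta > 0$.

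The main obstacle is orientation bookkeeping: one must fix the ordering of the generators $T_1, T_2$ and the sign of $w$ coherently so that the minus sign in $\Omega(u,B,w) = -d\psi(w)$ is absorbed, aligning the sign conventions on $\Delta = \det\calT$ and on $dV/d\psi$. Beyond this sign check, the argument is essentially the observation that each flux surface is a flat torus of area $|\Delta|$ in the $(t_u,t_B)$ coordinates supplied by the commuting action, while the shell between consecutive surfaces has unit thickness in~$\psi$; a reassuring consistency check is that specialising to $T_1 = (\tau,0)$ and $T_2 = (\text{twist},T)$ (as arises from weak quasisymmetry) gives $\Delta = \tau T$ and recovers Theorem~\ref{thm:tauTdpsi}.
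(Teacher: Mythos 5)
Your proposal is correct and is essentially the paper's own argument: both rest on the identity $i_u i_B\Omega = d\psi$ together with the Liouville--Arnol'd/lattice structure of the commuting $(u,B)$-action on the flux surface. The only difference is cosmetic --- you work directly in the action coordinates $\R^2/\Gamma$, where the fundamental parallelogram has area $\det\calT$ and the transverse Jacobian $\Omega(u,B,w)=\pm 1$, whereas the paper rescales to $\R^2/\Z^2$ and extracts $\det\calT$ from $\calT\calV=I$; the orientation issue you flag is present (and equally glossed over) in the paper's version.
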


\begin{proof}
In a Liouville-Arnol'd parametrisation of a flux surface by $(\theta^1,\theta^2) \in \R^2/\Z^2$, $u$ and $B$ have constant components $(u^1,u^2), (B^1,B^2)$, so make a matrix $\calV$ with the components of $u$ and $B$ as columns. Then $d\psi = i_ui_B\Omega = \det \calV\, i_{\partial_{\theta_1}}i_{\partial_{\theta_2}} \Omega = \det \calV \, dV$.   But the lattice generators satisfy $\calT \calV = I$, the identity matrix, so $\det \calV \det \calT = 1$ and hence $dV = \det \calT d\psi$.
\end{proof}

From this one can again deduce the volume enclosed by a flux surface, by 1D integration with respect to $\psi$ from a reference case.  As the method requires a 1D set of 1D integrations (to compute $\Delta(\psi)$), it can again be considered a 2D integration overall, but that is still a saving compared to the 3D integration of $\Omega$.


Quasi-symmetric and weak quasisymmetric fields are special cases of this, with $[T_1\, T_2] = \left[\begin{array}{cc} \tau & c \\ 0 & T \end{array} \right]$ for some $c$ (the value such that $\phi_{(0,T)}(x) = \phi_{(-c,0)}(x)$).

\section{Fields with a foliation by flux surfaces}

If there is a foliation by flux surfaces (without knowing if there is a symmetry $u$) then one can still compute the enclosed volume by an essentially 2D integration, as follows.

For any invariant surface $S$ under $B$, the 2-form $\beta = dA^\flat$ vanishes on its tangent spaces.  It follows that $$\Phi = \int_\gamma A^\flat$$ is the same for all closed curves $\gamma$ on $S$ in the same homology class.  It can be interpreted as a magnetic flux.  Specifically, if $\gamma$ is the boundary of a disk $D$ then $\int_\gamma A^\flat = \int_D \beta$; if $\gamma - \gamma_0$ is the boundary of an annulus $A$ then $\int_\gamma A^\flat - \int_{\gamma_0} A^\flat = \int_A \beta$.  Think of the cases of a poloidal loop and of a toroidal loop relative to a magnetic axis $\gamma_0$.

We restrict attention to the case that $B$ is nowhere zero in the region of interest. Using volume-preservation and the foliation by invariant tori, one can deduce that the flow on each torus is of Poincar\'e type (in the terminology of \cite{BGKM}), i.e.~it has a transverse section such that every $B$-line crosses it in both directions.  Specifically, label the flux surfaces as level sets of a smooth function $\psi$ with $d\psi \ne 0$, choose any vector field $n$ such that $i_n d\psi = 1$ (e.g.~choose a Riemannian metric and let $n = \nabla \psi/|\nabla \psi|^2$), and let $\calA = i_n\Omega$. Then $\calA$ is non-degenerate on each flux surface and preserved by $B$.  This rules out the case of Reeb components (an annulus bounded by periodic orbits in opposite directions), because area would be contracted on approaching a bounding periodic orbit, so for a nowhere-zero field, the only case left is Poincar\'e type.

Given a closed curve $\gamma$ on a flux surface $S$, transverse to $B$, define $\bar{T}$ to be the limiting average return time to $\gamma$ of an infinitely long $B$-line starting at some point of $\gamma$.  By the theory of Poincar\'e flows on a torus, this average exists.  

Furthermore, if $B$ winds irrationally on $S$ it is the same for all initial conditions on $\gamma$.  In this case, $\gamma$ can be deformed to one for which the return time is constant, but for computational purposes it is enough to estimate the average for a given $\gamma$.  The weighted Birkhoff average method \cite{Meiss} is a promising route to do this accurately.  Alternatively, we propose approximating the integral of the return time from its values along one orbit segment (see section~\ref{sec:rettime}).

In the case of a rational surface, all trajectories are closed (this follows from preservation of $\calA$ on flux surfaces, specifically $\int_\eta i_B\calA$ from a reference point to an arbitrary point $x$ of $S$ along any curve $\eta$ on $S$ is preserved by $B$ if $\eta$ is chosen to depend continuously on $x$ \cite{N}) and we define $\bar{T}$ to be the average of the return time with respect to the area $\calA$.  But if the set of rational surfaces have measure zero then we will not really need this.

If the space is locally foliated by flux surfaces and $\gamma$ is chosen continuously then we obtain in the same way again
\begin{thm}
$dV = \bar{T} d\Phi.$
\label{thm:TdPhi}
\end{thm}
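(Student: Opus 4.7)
The plan follows the same schema as Theorem 1 and its proof: relate the thin slab of volume between two neighbouring flux surfaces $S_\psi$ and $S_{\psi+d\psi}$ to a flux integral on a cobounding annulus, then compute the volume swept out along $B$ as a return-time-weighted flux. Fix the continuously chosen curves $\gamma \subset S_\psi$ and $\gamma' \subset S_{\psi+d\psi}$ in the same homology class. They cobound a thin annulus $A$ in the shell between the two surfaces, which (generically, by small perturbation if necessary) we take transverse to $B$.

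The flux of $B$ through $A$ is
$$\int_A \beta \;=\; \int_{\partial A} A^\flat \;=\; \Phi(\psi+d\psi) - \Phi(\psi) \;=\; d\Phi$$
by Stokes' theorem and the definition of $\Phi$. Using the identity $\Omega = (B^\flat/|B|^2)\wedge\beta$ to decompose the shell into $B$-orbit arcs based on $A$, as in the derivation of (\ref{eq:Tbeta}), the enclosed volume is
$$dV \;=\; \int_A T\,\beta,$$
where $T$ is the first-return time of the $B$-flow to $A$. Therefore
$$\frac{dV}{d\Phi} \;=\; \frac{\int_A T\,\beta}{\int_A \beta},$$
a $\beta$-weighted average of $T$ on the annulus.

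The main step is then to identify the limit of this ratio as $d\psi \to 0$ with the time-average $\bar T$ defined in the text. A short computation in local coordinates $(\psi,\theta^1,\theta^2)$ adapted to the foliation shows that the normalised measure $\beta|_A/\int_A\beta$ converges, as $A$ collapses onto $\gamma$, to a constant multiple of $i_B\calA|_\gamma$. Since $\calA$ is $B$-invariant on the flux surface, $i_B\calA|_\gamma$ is precisely the invariant measure of the first-return map of the $B$-flow to the circle $\gamma$. In the irrational-winding case this return map is a uniquely ergodic circle homeomorphism, so its single invariant measure governs both the Birkhoff time-average defining $\bar T$ and the $\beta$-weighted limit above, forcing the two to coincide. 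In the rational-winding case $\bar T$ is defined to be the $\calA$-average of the (then constant-along-orbits) return time, which equals the same quantity by the same identification of measures.

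The hardest step is this ergodic identification — reconciling a two-dimensional $\beta$-weighted average on the collapsing annulus with the one-dimensional time-average along $B$-orbits — together with a clean treatment of the rational leaves. Under the hypothesis mentioned in the text that these form a measure-zero set in $\psi$, they contribute nothing to the final $\psi$-integration of $dV$ and can be excluded; this is what makes the passage from the formal infinitesimal identity to a rigorous integrated statement go through.
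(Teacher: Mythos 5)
Your proof is correct and follows essentially the same route the paper intends: the paper's ``proof'' is only the remark that the result is obtained ``in the same way again'' as Theorem 1, i.e.\ by exactly the annulus/return-time decomposition you use ($d\Phi = \int_A\beta$, $dV=\int_A T\beta$). Your extra ergodic step identifying the $\beta$-weighted average on the collapsing annulus with the time-average $\bar{T}$ (via the $B$-invariant measure $i_B\calA|_\gamma$ of the return map and unique ergodicity in the irrational case) correctly supplies the detail the paper leaves implicit.
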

Note that in the case of a field with a flux-form symmetry, this result is consistent with that of Theorem~\ref{thm:Deltadpsi}, because given $u$, construct Liouville-Arnol'd coordinates $(\theta^1,\theta^2)$, suppose the second component $B^2 \ne 0$ (else interchange the coordinates), choose $\gamma$ to be $\theta^2=0$, then $d\Phi = i_{\partial_{\theta_1}}i_B\Omega = B^2 dV$ and the return time $T=1/B^2$ (constant).

The work required to use Theorem~\ref{thm:TdPhi} is slightly larger than 2D integration, in the sense that estimating the mean return time for a torus requires more than one revolution.  But we think it will still be less than a full 3D integration.

If the vector potential $A$ is not provided then computing it at a point from $B$ is a 1D integral (see the treatment of Poincar\'e's lemma in section~\ref{sec:gen}), so $\Phi$ is a 2D integration and then $V$ is a 3D integration.  But one can replace the computation of $\Phi$ by $d\Phi = \int_\gamma i_Y\beta$ where $Y$ is a vector field on $\gamma$ pointing to its homologue on an infinitesimally nearby flux surface, so use of $A$ was not really necessary, and this remains a 2D integration.

\subsection{Connections to established results}

Let us connect to results surveyed in \cite{H}.  Eq.~(19) of \cite{H} is
$$V'(\psi) = \int_0^{2\pi} d\alpha \int \frac{d\ell}{|B|},$$
where the $\ell$-integral is taken for one toroidal revolution, $\psi$ has been chosen to be the toroidal flux enclosed by the surface divided by $2\pi$, and $\alpha$ is a fieldline label such that $\beta = d\psi \wedge d\alpha$ (making $(\alpha,\psi)$ into ``Clebsch coordinates'' for $B$).  Now, $\int \frac{d\ell}{|B|}$ is the return time along fieldline flow to a poloidal section, the average with respect to $\calA$ is the average with respect to $\alpha$, and $\Phi = 2\pi \psi$.  So the formula agrees with that of Theorem~\ref{thm:TdPhi}.

Eq.~(32) of \cite{H} says that in non-degenerate MHS, the period of all fieldlines on a rational surface is the same (Hamada condition), so ``all flux tubes carrying the same magnetic flux on a given surface must have the same volume''.  Thus, $\bar{T}$ can be replaced by the period divided by the number of toroidal revolutions, which recovers the result in the equation before (22) in \cite{H}.  The Hamada condition is a particular consequence of the Liouville-Arnol'd theorem used in section~\ref{sec:ffsymm}.  A more general result for non-degenerate MHS, cited just after (30) in \cite{H}, is that the Jacobian of the Hamada coordinate system is a flux function.  Indeed, Hamada coordinates are ones in which $J$ and $B$ are constant on each flux surface and the Jacobian is $\Delta(\psi)$ from Theorem~\ref{thm:Deltadpsi}.

\subsection{Average return time from one orbit segment}
\label{sec:rettime}

Here is the proposed method to estimate the mean return time from one orbit segment (more accurately than just taking the average of the return time).  It applies to tori with irrational winding ratio $\iota$.  Then there is a parametrisation of $\gamma$ by $\theta \in \R/\Z$ such that the first return map to $\gamma$ is $\theta' = \theta+\iota$.  Express the return time as a function $T(\theta)$.  We desire its average $\bar{T}$ with respect to $\theta$,  i.e.~$\bar{T} = \int T(\theta)\, d\theta$.
Given an orbit segment with successive return times $T_n, n = 1,\ldots, N$, we can label the initial point by $\theta=0$ and then $T_n = \tau((n-1)\iota)$.  The values $(n-1)\iota$ can be reduced to the interval $[0,1)$ and $\int \tau(\theta) \, d\theta$ approximated by the trapezoidal rule applied to these points.

The method requires to first estimate $\iota$.  This can be done by choosing some coordinate $\phi: \gamma \to \R/\Z$, with initial point $\phi=0$, and looking at the order in which the successive returns $\phi_n$ come.  Let $n_k$, $k\ge 1$, be the indices for the successively closest returns to the initial point (they will be on alternate sides of $0$) and $m_k$ be the closest integer to the number of revolutions of $\gamma$ made (this requires choosing the same convention as for $\iota$).  Then $\iota$ can be estimated from the linear approximation to $\phi(\theta)$ near $\theta=0$:
$$\frac{n_{k}\iota - m_{k}}{n_{k-1}\iota - m_{k-1}} \sim \frac{\phi_{n_{k}}}{\phi_{n_{k-1}}}.$$
So
$$\iota \sim \frac{m_k \phi_{n_{k-1}}-m_{k-1}\phi_{n_k}}{n_k \phi_{n_{k-1}}-n_{k-1}\phi_{n_k}}.$$

Note that there will be a sequence $a_k$ of positive integers such that $n_{k+1} = a_k n_k + n_{k-1}$ and the same for $m_{k+1}$ (defining $n_0 = 1, m_0 = 0$).  This sequence gives the continued fraction expansion of $\iota$.

\section{General fields}
\label{sec:gen}

For a general magnetic field there may be a set of flux surfaces of given class, interspersed with island chains and ``chaos'' (the class can be defined relative to a magnetic axis, for example).  

Can one compute the integral $\int_{D'} T \beta$ of the return time $T$ over the part $D'$ of a transverse section $D$ that is inside a flux surface in a more efficient way than the obvious 2D integral of the result of 1D integrations?

Yes.  $T\beta$ is a top-form on $D$.  If $D$ is contractible (e.g.~a poloidal section) then choose a vector field $X$ on $D$ whose flow $\phi$ contracts $D$ to a point as $t \to +\infty$, and for any tangent $\xi$ to $D$, let $\eta(\xi) = -\int_0^\infty T\beta(\phi_{t*}X,\phi_{t*}\xi)\, dt$ (this is one way to prove Poincar\'e's lemma).  Then $T\beta = d\eta$ on $D$ and so for any subset $D' \subset D$, $\int_{D'} T\beta = \int_{\partial D'} \eta$.

To make it concrete, suppose coordinates $(u,v)$ on $D$ such that the forward flow of $X(u,v) = (-u,-v)$ is defined on $D$.  It is just $\phi_t(u,v) = e^{-t}(u,v)$.  Then for a tangent vector $\xi$ at $r=(u,v)$, $\eta(\xi) = \int_0^\infty e^{-2t} T \beta(r,\xi)\, dt$, where $T$ and $\beta$ are evaluated at $e^{-t}r$.  Recall that $\beta = i_B\Omega$, so $\beta(r,\xi) = \Omega(B,r,\xi)$, the triple product in the Euclidean case.

Thus, we have reduced the calculation of the volume enclosed by a flux surface to a 1D integral (around $\partial D'$) of a 1D integral $\eta(\xi)$ with $\xi$ the tangent to $\partial D'$.  This is essentially a 2D integral.

If $D$ is not contractible, e.g.~an annulus between a closed curve on $S$ and a magnetic axis $\gamma_0$, then can instead contract to $\gamma_0$ and obtain a similar reduction to integration of a 1-form, that we do not spell out here.

But the question remains whether this is any better than integrating a primitive of $\Omega$ over the surface.


\section{Fluxes}
It is interesting to compare computation of enclosed volume with the question of computing the fluxes of a flux surface.

Given a ``poloidal'' closed curve $\gamma$ on a flux surface $S$ bounding a disk $D$ in $M$, its flux is $\Phi = \int_D \beta$.  This a 2D integral.
It can be written as $\Phi = \int_\gamma A^\flat$, reducing the computation to a 1D integral (assuming that a vector potential $A$ is given, else its computation is already a 1D integration and no saving results).
By exactness of $\beta$, $\Phi$ is independent of the disk $D$ chosen to span $\gamma$.  By invariance of the flux surface, it is also independent of the representative $\gamma$ of its homology class in $H_1(S)$.  This is because $\beta = 0$ on all pairs of tangent vectors to $S$.

To define the flux for a general homology class on a flux surface requires some more discussion.  For a flux surface $S$ bounding a solid torus with $B$ nowhere-zero inside, there is a closed fieldline inside.  Choosing one, designated the magnetic axis $\gamma_0$, one can define the flux for a non-poloidal closed curve $\gamma$ on $S$ to be $\int_A \beta$ where $A$ is the annulus with boundary $\gamma - \gamma_0$.  Again, this can be reduced to 1D integrals $\int_\gamma A^\flat - \int_{\gamma_0} A^\flat$.

As already mentioned, for a field with flux-form symmetry $u$ with closed $u$-lines, $d\Phi = \tau d\psi$, where $\tau(\psi)$ is the period of the $u$-lines. 

\subsection{Percival's variational principle}

The fluxes are associated with one strategy for computing flux surfaces, namely an adaptation of Percival's variational principle for invariant tori of Hamiltonian systems \cite{P} to the magnetic field context (see \cite{GKN} for another extension of Percival's variational principle).

Let $A$ be a vector potential for $B$.  Given $\omega \in \R^2 \setminus \{0\}$ and a differentiable mapping $x: \T^2 = \R^2/\Z^2 \to M$, define $P$ (for Percival) by
$$P_\omega(x) = \int A_j(x(\theta))  \frac{\partial x^j}{\partial \theta^i}(\theta)\omega^i  \, d^2\theta.$$
The first variation of $P_\omega(x)$ with respect to a variation $\delta x$ is
$$\delta P_\omega = \int   A_{j,k}\delta x^k x^j_{,i} \omega^i + A_j \delta x^j_{,i}\omega^i \, d^2\theta,$$
where subscript $_{,i}$ denotes partial derivative in direction $i$.
The second term can be integrated by parts to produce
$$\delta P_\omega = \int  (A_{j,k}-A_{k,j})x^j_{,i} \omega^i \delta x^k\, d^2\theta = \int  \eps_{jkl}B^lx^j_{,i} \omega^i \delta x^k\, d^2\theta,$$
where $\eps$ is the Levi-Civita symbol.
This is zero for all variations $\delta x$ iff $x^j_{,i}\omega^i $ is parallel to $B^j$, i.e.~there is a function $c: \T^2 \to \R$ such that 
$$x^j_{,i}(\theta)\omega^i = c(\theta) B^j(x(\theta)).$$  In particular, if $x$ is a diffeomorphism then the image of $x$ is a flux surface (furthermore, $c$ is nowhere zero).

In more detail, this says that the function $x$ conjugates $\dot{\theta} = \omega/c(\theta)$ to $\dot{x}=B(x)$ on the flux surface.  Furthermore, assuming $B$ is nowhere zero on the surface, the fieldlines on the surface have winding ratio $\iota = \omega^2/\omega^1$ with respect to $(\theta^1,\theta^2)$.  The $\theta$ motion has invariant probability density $c(\theta)/\bar{c}$, where $\bar{c}$ is the average of $c$ with respect to $d^2\theta$.

For irrational winding ratio, we can use ergodicity.  $P_\omega(x)$ is the $\theta$-average of $A\cdot \frac{dx}{d\tau}$ for $\frac{d\theta}{d\tau}=\omega$.  Transforming time by $dt = c(\theta) d\tau$, we see that $P/\bar{c}$ is the average of the helicity $A\cdot B$ with respect to the invariant probability on $S$.  By unique ergodicity of the $\theta$ motion, this is the $t$-average of $A\cdot B$ along any fieldline on $S$.
Similarly, $\bar{c}$ has the interpretation that $\omega_j/\bar{c}$ is the time-averaged numbers of revolutions $\Delta \theta_j$ along a fieldline.

Note that both $P$ and $c$ are homogeneous of first degree in $\omega$.  So one can consider $P/\bar{c}$ to be a function of $\iota$.

In the differentiable context, there is no reason for $P$ to have a critical point.
A main point of Percival's variational principle is that it allows extension to functions $x$ that are differentiable in direction $\omega$ but not necessarily continuous.  Indeed, Mather used the principle to prove existence of a set $M_\rho$ of quasiperiodic orbits for each irrational rotation number $\rho$ for area-preserving twist maps \cite{Ma}.  $M_\rho$ is either a circle or a Cantor set, thus providing an answer in this context to the question of what happens to invariant tori beyond where KAM theory applies.  The case of a Cantor set was christened ``cantorus'' by Percival \cite{P2}.

One can expect the same conclusion to apply to magnetic fields as long as they have shear, meaning that in some coordinate system $(r,\phi_1,\phi_2)$, $\frac{\partial }{\partial r}\frac{B^{\phi_2}}{B^{\phi_1}} \ne 0$.

One consequence of the variational principle is that if one has computed the stationary value of $P_\omega$ for each $\omega$ (it is enough to do for each $\iota$ by homogeneity in $\omega$) then one can read off the toroidal and poloidal fluxes $\Phi_j$ as follows.  By invariance of the flux surface, the integral of $A^\flat$ along a long fieldline can be deformed into the weighted sum of its integrals around a basis for cycles on the flux surface, $P_\omega= \Phi_1 \omega_1 + \Phi_2 \omega_2$.
$P_\omega$ is stationary with respect to variations in $x$, so assuming differentiability of the critical value of $P_\omega$ with respect to $\omega$ (which could probably be proved along the lines of \cite{Ma2}) we deduce that $\frac{\partial P}{\partial \omega_j} = \Phi_j$.

This formula is an analogue of that for the area under an invariant circle or cantorus for area-preserving twist maps as derivative of mean action with respect to rotation number \cite{Ch}, and the equivalent one for chemical potential of an incommensurate structure for Frenkel-Kontorova chains as derivative of the mean energy with respect to the mean spacing \cite{Au}.  

\section{Computing flux surfaces}
The paper begs the question of how to compute flux surfaces.  This might depend on the context, but one way is Percival's variational principle.  It can be formulated as looking for a critical point of a functional in a space of double periodic functions, which could be represented by Fourier series.  One may not know in advance, however, whether there exists a smooth solution.  If the field has shear then one can do a partial stationarity to replace a minor radius coordinate by first derivatives with respect to the others, rendering a functional that is bounded below and hence amenable to minimisation methods.  But a minimiser might still not be smooth.

Another method is to look for flux surfaces as limits of sequences of periodic orbits.  Again, this might not give a flux surface, but if the ``residues'' of the periodic orbits go to zero, they are quite likely to accumulate on a flux surface.  A formal result is available in the other direction (if there is a smooth flux surface with shear, then there are sequences of periodic orbits that converge to it with residues going to zero \cite{M2}).  Outermost flux surfaces can be found as boundaries of limits of sequences of periodic orbits whose residues are bounded \cite{GMS}.

\section{Conclusion}
The paper has presented methods to reduce the computation of the volume enclosed by a flux surface of a magnetic field from 3D to 2D integration.

A question remains:~are any of the methods more efficient than the first one that does not use invariance of the flux surface under the field?  The answer is likely to depend on how the field is presented (e.g.~directly or via a vector potential) and how the flux surfaces are presented (e.g.~as a graph of minor radius against two angle variables, as the closure of the orbit of one point...).

It would be interesting to test the methods numerically.

\section*{Acknowledgements}
This work was supported by a grant from the Simons Foundation (601970, RSM).  I am grateful to David Martinez for detailed comments.

\end{document}